\documentclass{amsart}
\usepackage{amsfonts,amssymb,amsthm,amsmath}
\numberwithin{equation}{section}

\title[Relations in Quantum \(K\)-Theory of Flag Varieties]{
Minuscule Relations in Quantum \(K\)-theory \\
 of
 flag varieties
}
\author[K.~Brahma, Y.~Huang, T.~Ikeda, T.~Kouno, K.~Yamaguchi]{Koushik Brahma, Yicen Huang, Takeshi Ikeda, \\ Takafumi Kouno, Kohei Yamaguchi}
\dedicatory{
Waseda University, Faculty of Science and Engineering\\
3-4-1 Okubo, Shinjuku-ku
Tokyo 169-8555, Japan\\
Email addresses: koushikbrahma95@gmail.com, huangyicen@akane.waseda.jp \\
 gakuikeda@waseda.jp, t.kouno@aoni.waseda.jp, yamaguchi\_86@aoni.waseda.jp
}

\date{\today}

\usepackage{mathtools}
\mathtoolsset{showonlyrefs}
\usepackage[mathscr]{euscript}

\newcommand{\C}{\mathbb{C}}
\newcommand{\CO}{\mathscr{O}}
\newcommand{\I}{\mathbf{I}}

\newcommand{\af}{\mathrm{af}}

\newcommand{\Wafpos}{W_\af^{\ge 0}}

\newcommand{\la}{\lambda}
\newcommand{\Rep}{R(T)}

\renewcommand{\L}[1]{{L}(#1)}

\newcommand{\COQl}[1]{\mathscr{O}_{\mathbf{Q}_G}(#1)}
\newcommand{\COQ}[1]{\mathscr{O}_{\mathbf{Q}_G(#1)}}

\newcommand{\fh}{\mathfrak{h}}

\newcommand{\fg}{\mathfrak{g}}

\newcommand{\QG}{\mathbf{Q}_G}
\newcommand{\KTQG}{K_T(\QG)}
\newcommand{\vpi}[1]{{\varpi_{#1}}}

\renewcommand{\t}[1]{\tau_{#1}}
\newtheorem{thm}{Theorem}[section]
\newtheorem{lem}[thm]{Lemma}

\theoremstyle{definition}
\newtheorem{example}[thm]{Example}

\theoremstyle{remark}
\newtheorem{remark}[thm]{Remark}
\newcommand{\FZ}{Z}%Z{\mathfrak{Z}}

\begin{document}

\subjclass[2020]{14N35, 14M15}
\keywords{Quantum $K$-theory, flag variety, minuscule character, fundamental weight, semi-infinite flag manifold}

\begin{abstract}
We study the quantum \(K\)-theory of the flag variety \(G/B\). For each
minuscule fundamental weight \(\varpi\), we construct an explicit relation
in the torus-equivariant quantum \(K\)-theory \(QK_T(G/B)\). The relation
can be regarded as a quantum deformation of the character of the
irreducible representation with highest weight \(\varpi\).
\end{abstract}
\maketitle

\section{Introduction}
Let $G$ be a simply connected simple complex algebraic group,
$B\subset G$ a Borel subgroup, and $T\subset B$ a maximal torus.
Set $\fg=\operatorname{Lie}(G)$, and let \(R(T)\) denote the
representation ring of \(T\). Let $QK_T(G/B)$ denote the
$T$-equivariant (small) quantum $K$-ring of the flag variety $G/B$, with Novikov variables $Q_i$
indexed by the simple coroots (see \cite{Givental} and \cite{Lee}).
Our goal is to construct, for each minuscule fundamental weight
$\varpi$, an explicit relation in $QK_T(G/B)$ associated with the
character of the irreducible $\fg$-module of highest weight $\varpi$.

Let $P$ denote the weight lattice of $\mathfrak g$. For $\lambda\in P$, let
\(\mathbb C_\lambda\) denote the one-dimensional \(B\)-module of weight
\(\lambda\), on which the unipotent radical of \(B\) acts trivially. 
We
denote by
\[
\CO_{G/B}(\lambda):=G\times^B \mathbb C_{-\lambda}
\]
the corresponding \(G\)-equivariant line bundle on \(G/B\).

Let \(I\) be the index set of the simple roots
\(\{\alpha_i\}_{i\in I}\), and
write $\alpha^\lor$ for the coroot corresponding to a root $\alpha.$ 
For \(i\in I\), let \(\varpi_i\) denote the \(i\)-th fundamental weight.
For \(\lambda\in P\), define
\begin{equation*}
    \L{\la}
    :=
    \prod_{i\in I}
    \CO_{G/B}(-\varpi_i)^{-\langle \lambda,\alpha_i^\lor\rangle}
    \in QK_T(G/B).
\end{equation*}
Here we use the same notation for a line bundle and its \(K\)-class,
and all products and inverses are taken with respect to the quantum product.
In particular, $L(-\varpi_i)=\CO_{G/B}(-\varpi_i).$
Although \(\L{\lambda}\) specializes to \(\CO_{G/B}(\lambda)\)
at \(Q_i=0\) for all \(i\in I\), it need not equal the line bundle
class \(\CO_{G/B}(\lambda)\) in \(QK_T(G/B)\).

For a dominant integral weight \(\lambda\), let \(V(\lambda)\)
denote the irreducible \(\fg\)-module of highest weight \(\lambda\),
and let \(\chi_\lambda\in R(T)^W\) denote its character,
where \(W\) is the Weyl group of \(G\).
We can regard \(\chi_\lambda\) as an element of \(QK_T(G/B)\)
via its \(R(T)\)-algebra structure.
An integral weight $\lambda$ is called \emph{minuscule} if
  $\langle \lambda,\alpha^\vee\rangle \in \{-1,0,1\}$
  for every positive root $\alpha$. 
If $\varpi$ is a minuscule fundamental weight, 
we have
$$
\chi_{\varpi}=\sum_{\mu\in W\cdot \varpi}
e^\mu.
$$
Our main result is the following.

\begin{thm}\label{thm:main}
Let \(\varpi\) be a minuscule fundamental weight. In \(QK_T(G/B)\), we have
\begin{equation}
    \label{eq:main}
    \sum_{\mu\in W\cdot\varpi}
    \prod_{\substack{i\in I\\
        \langle\mu,\alpha_i^\lor\rangle={-}1}}
        (1-Q_i)
\;\,    \L{-\mu}
    =
    \chi_\varpi .
\end{equation}
\end{thm}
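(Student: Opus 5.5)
We would prove \eqref{eq:main} in the semi-infinite flag manifold model. The Kato isomorphism identifies \(QK_T(G/B)\), after localization, with the \(T\)-equivariant \(K\)-group \(\KTQG\) of the semi-infinite flag manifold \(\QG\). Under it, quantum multiplication by \(\CO_{G/B}(-\varpi_i)\) corresponds to tensoring with the line bundle \(\COQl{-\varpi_i}\), and \(Q_i\) corresponds to the translation \(t_{\alpha_i^\lor}\). Hence \(\L{\lambda}\) corresponds to \(\COQl{\lambda}\) applied to the class \([\CO_{\QG}]\) of the structure sheaf. We then want an identity in \(\KTQG\) of the form
\begin{equation*}
\sum_{\mu\in W\cdot\varpi}\prod_{\langle\mu,\alpha_i^\lor\rangle=-1}(1-t_{\alpha_i^\lor})\,[\COQl{-\mu}] = \chi_\varpi\,[\CO_{\QG}].
\end{equation*}
Here \(\COQl{-\mu}\) means the class \(\prod_i [\COQl{-\varpi_i}]^{\langle \mu,\alpha_i^\lor\rangle}\) computed via line bundle twists, which is a genuine line bundle on \(\QG\) since \(\lambda\mapsto\COQl{\lambda}\) is multiplicative there. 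This multiplicativity is exactly why \(\L{\lambda}\), rather than \(\CO_{G/B}(\lambda)\), appears.

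The key tool is the Chevalley formula of Naito--Orr--Sagaki and its refinement by Kouno--Naito--Orr--Sagaki. It expresses \([\COQl{\lambda}\otimes\CO_{\QG(x)}]\) in terms of quantum Lakshmibai--Seshadri paths of shape \(\lambda\) for anti-dominant \(\lambda\), or of shape \(-w_0\lambda\) for dominant \(\lambda\). For a minuscule \(\varpi\), the QLS paths of shape \(\varpi\) are just the straight-line paths \(\eta_\mu\) with \(\mu\in W\cdot\varpi\), with no intermediate points. The Chevalley formula for \(\COQl{\varpi}\) and \(\COQl{-w_0\varpi}\) then collapses to a sum over \(W\cdot\varpi\) of terms \(e^{\mu}\) times structure sheaves \(\CO_{\QG(\cdots)}\) of shifted Schubert varieties. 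The first step is to write the orbit sum \(\sum_{\mu}e^{\mu}\ldots\) from this minuscule Chevalley formula at the identity Schubert class.

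The second step is to convert each \(\COQl{-\mu}\) for nondominant \(\mu\) into an expression involving \(\COQl{-w\varpi}\) and translations. We would use the \(W\)-action on \(\KTQG\): the line bundle \(\COQl{-\mu}\) with \(\mu=w\varpi\) is related to \(\COQl{-\varpi}\) via Demazure-type operators \(s_i\). Equivalently, we would proceed by induction along the Bruhat order of \(W/W_\varpi\), which for minuscule \(\varpi\) is a distributive lattice in which each step changes \(\mu\) by one simple root. At a step \(\mu\to s_i\mu\) with \(\langle\mu,\alpha_i^\lor\rangle=1\), we would use the \(\mathbb{P}^1\)-fibration relation \(\COQl{\lambda}-\COQl{s_i\lambda}\) in the semi-infinite setting. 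This relation picks up exactly the factor \((1-t_{\alpha_i^\lor})\), because the semi-infinite Schubert variety of \(s_i\) differs from that of \(e\) by a translation along \(\alpha_i^\lor\). Summing telescopically over the orbit should yield the coefficient \(\prod_{\langle\mu,\alpha_i^\lor\rangle=-1}(1-Q_i)\).

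The main obstacle is this second step: matching the quantum factors exactly. It requires identifying the quantum Bruhat edges, where \(Q\)-factors arise, for the minuscule orbit. These edges are precisely \(\mu\to s_i\mu\) with \(\langle\mu,\alpha_i^\lor\rangle=-1\), since all other quantum edges are absent for minuscule weights. It also requires checking that the alternating signs from the Chevalley formula for anti-dominant weights produce the products \(\prod(1-Q_i)\) rather than some other combination. As a sanity check, we would verify the type \(A_1\) case, \(\varpi=\varpi_1\): there the claimed relation reads \(\L{-\varpi_1}+(1-Q_1)\L{\varpi_1}=e^{\varpi_1}+e^{-\varpi_1}\), which should match the known quantum \(K\)-ring of \(\mathbb{P}^1\). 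We would use this and type \(A_2\) to fix all sign conventions before doing the general induction.
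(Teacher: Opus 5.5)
Your plan leaves the decisive step unproved, and the mechanism you propose for it is not available. What is actually needed is, for \emph{every} $\mu\in W\cdot\varpi$ and not only the extremal weights, the termwise identity
\[
\Psi\Bigl(\prod_{\langle\mu,\alpha_i^\vee\rangle=-1}(1-\tau_{\alpha_i^\vee})\,\CO_{\QG}(w_\circ\mu)\Bigr)=\CO_{G/B}(-\mu).
\]
This is the Naito--Sagaki result (Lemma~\ref{lem:minuscule_correspondence}). It comes from a Chevalley formula for \emph{arbitrary} minuscule weights: the Lenart--Naito--Sagaki formula, whose IQLS paths are straight lines in the minuscule case. The dominant and anti-dominant formulas of Naito--Orr--Sagaki and Kouno--Naito--Orr--Sagaki do not give it, since they only control $\CO_{\QG}(\pm\varpi)$ up to $w_\circ$. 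Your proposal does not supply a relation in $\KTQG$ converting $\CO_{\QG}(\lambda)$ into $\CO_{\QG}(s_i\lambda)$ at the cost of a factor $(1-\tau_{\alpha_i^\vee})$. The justification you give is also incorrect: $\QG(s_i)$ is not a translate of $\QG(e)$. Under $\Psi$, $\COQ{s_i}$ goes to the opposite Schubert divisor class $\CO^{s_i}_{G/B}$, whereas $\tau_{\alpha_i^\vee}\COQ{e}=\COQ{t_{\alpha_i^\vee}}$ goes to $Q_i$. Your Step~1 only expands a single (anti)dominant line bundle into Schubert classes, so it cannot by itself produce $\chi_\varpi$.

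You are also missing the reduction that makes the argument short. Kato's property (ii) gives $\Psi(\CO_{\QG}(-w_\circ\lambda)\otimes Z)=L(\lambda)\cdot\Psi(Z)$, and property (i) gives $\Psi\circ\tau_\xi=Q^\xi\,\Psi$. Combining these with the termwise identity yields $\CO_{G/B}(-\mu)=\prod_{\langle\mu,\alpha_i^\vee\rangle=-1}(1-Q_i)\,L(-\mu)$ for each $\mu$ separately. The theorem is then just the classical \emph{linear} identity $\sum_{\mu\in W\cdot\varpi}\CO_{G/B}(-\mu)=\chi_\varpi$ in $K_T(G/B)$, from the Borel presentation. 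It holds verbatim in $QK_T(G/B)$ because no products occur in it. So the $(1-Q_i)$ factors are attached to each summand individually, and each summand is literally a classical line bundle class. They do not emerge from a telescoping sum over the orbit, and no analysis of quantum Bruhat edges or signs is required.
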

At \(Q_i=0\) for all \(i\in I\), this identity reduces to
the relation associated with \(\chi_\varpi\) in the classical
Borel presentation of the $T$-equivariant $K$-theory ring \(K_T(G/B)\) of $G/B$; see McLeod \cite{Mc}.

In type \(\mathrm A_{n-1}\), all the fundamental weights
\(\varpi_i\), \(i=1,\ldots,n-1\), are minuscule. 
The corresponding relations coincide with the defining relations
in the Borel presentation of \(QK_T(SL_n(\mathbb C)/B)\)
established by Maeno, Naito, and Sagaki \cite{MNS1}. (See also Koroteev, Pushkar, Smirnov, and Zeitlin \cite{KPSZ}
for a related presentation in type \({\mathrm A}_{n-1}\)
obtained via quasimaps to quiver varieties.)
In type \(\mathrm C_n\), $\varpi_1$ is minuscule,  and 
the corresponding relation
coincides with one of the defining
relations in the Borel presentation of
\(QK_T(Sp_{2n}(\mathbb C)/B)\) given by Kouno and Naito \cite{KN}. Thus, our theorem gives relations in types \(\mathrm B_n\),
\(\mathrm D_n\), \(\mathrm E_6\), and \(\mathrm E_7\), and recovers
the known relations in types \(\mathrm A_{n-1}\) and \(\mathrm C_n\). We refer the reader to Section~\ref{sec:example}
for explicit examples of these relations in low rank.

\medskip

\noindent\textbf{Acknowledgements.}
We are especially grateful to Satoshi Naito and Daisuke Sagaki for bringing the results
of \cite{NS} to our attention.
We thank 
Hiroshi Iritani, Shinsuke Iwao, Hayato Koike, Yuan-Pin Lee,
Leonardo Mihalcea, Daniel Orr, Mark Shimozono,
Jun'ichi Shiraishi, and Kanehisa Takasaki for valuable discussions. This work was supported by JSPS KAKENHI Grant Numbers
24KF0258, 25KF0074, 
23K25772, 22K03239, 
26K16968. 
K.~Brahma was supported by a JSPS Postdoctoral Fellowship for Research in Japan, during which this research was carried out.

\section{Semi-infinite flag variety and Kato's isomorphism}\label{sec:semi-Kato}
In this section, we recall the semi-infinite flag variety \(\QG\) and its
Schubert classes. We follow 
the convention of \cite[\S~3]{MNS1} and
\cite[\S~2]{O}. 
We then explain Kato's isomorphism \cite{K}, which allows us to
reformulate the main theorem as an identity in \(\KTQG\).

\subsection{Basic notation}

We write $
  \fg=\operatorname{Lie}(G),
  \fh=\operatorname{Lie}(T).
$
Let $\Phi\subset\fh^*$ be the root system of $(G,T)$.
The choice of $B$ determines the set 
of positive roots.
We write
$
  \Delta:=\{\alpha_i\mid i\in I\}
$
for the corresponding set of simple roots.
For each $\alpha\in\Phi$, let $\alpha^\lor$ denote the corresponding
coroot.  We use the natural pairing
$  \langle\,\cdot\,,\,\cdot\,\rangle
  \colon P\times Q^\lor\longrightarrow\mathbb Z,
$
where $P$ is the weight lattice and
$  Q^\lor=\sum_{i\in I}\mathbb Z\alpha_i^\lor,
  \;
  Q^{\lor,+}=\sum_{i\in I}\mathbb Z_{\geq 0}\alpha_i^\lor.
$
The fundamental weights are denoted by $\varpi_i$, $i\in I$, and are
characterized by
$ \langle\varpi_i,\alpha_j^\lor\rangle=\delta_{ij}.
$

Let $W$ be the Weyl group of $G$.  For $\alpha\in\Phi$, let
$s_\alpha\in W$ denote the corresponding reflection, and put
$s_i=s_{\alpha_i}$.  We denote by $w_\circ$ the longest element of
$W$. 
We write
$W_{\mathrm{af}}=W\ltimes Q^\lor$
for the affine Weyl group.  For $\xi\in Q^\lor$, let $t_\xi$ denote
the corresponding translation element, and set
\[
  W_{\mathrm{af}}^{\geq 0}
  :=
  \{wt_\xi\mid w\in W,\ \xi\in Q^{\lor,+}\}.
\]
We identify the representation ring of $T$ with the group algebra
$ R(T)=\mathbb Z[P]
      =\bigoplus_{\lambda\in P}\mathbb Z e^\lambda.
$
For each $i\in I$, let $Q_i$ denote the Novikov variable corresponding
to $\alpha_i^\lor$.  For
$\xi=\sum_{i\in I}d_i\alpha_i^\lor\in Q^{\lor,+}$, we write
 $ Q^\xi:=\prod_{i\in I}Q_i^{d_i}.
$
\subsection{Semi-infinite flag variety}

The main reference for this subsection is \cite[\S~3.1]{MNS1},
which is based on \cite[\S~1.4--1.5]{K}.

Let $\QG$ denote the semi-infinite flag variety of $G$, and let
$\I\subset G(\C[\![t]\!])$ be the inverse image of $B$ under the
evaluation map at $t=0$.
The $\I$-orbits in $\QG$ are indexed by
$W_{\mathrm{af}}^{\geq0}$.
For $x\in\Wafpos$, we denote the closure of the corresponding
$\I$-orbit by $\QG(x)$ and call it a \emph{semi-infinite Schubert
variety}.  In particular, $\QG=\QG(e)$, where $e$ is the identity element of $W$.
Let $\COQ{x}$ denote the structure
sheaves of the semi-infinite Schubert varieties $\QG(x)$.

The $T$-equivariant $K$-group of $\QG$ is defined by
\[
  \KTQG
  :=
  \prod_{x\in\Wafpos}
  \Rep\,\COQ{x},
\]
where we simply denote
the class of the structure sheaf by the same symbol
$\COQ{x}$.
Thus, $\KTQG$ consists of all formal, possibly infinite,
$\Rep$-linear combinations of $\COQ{x}$, $x\in \Wafpos.$

For each $\la\in P$, we denote the associated $T$-equivariant line bundle on $\QG$
by $\COQl{\la}$ (see \cite[\S~3.1]{MNS1}).
\subsection{Kato's isomorphism}
There is an isomorphism between the torus-equivariant \(K\)-group of the
semi-infinite flag variety and the torus-equivariant quantum \(K\)-theory of
\(G/B\). We will use this isomorphism to translate identities in
\(K_T(\QG)\) into identities in \(QK_T(G/B)\).

Let
\[
  QK_T(G/B)
  :=R(T)[\![Q]\!]
  \otimes_{R(T)}K_T(G/B),
  \quad
  R(T)[\![Q]\!]
  :=
  R(T)[\![Q_i\mid i\in I]\!],
\]
where $Q_i=Q^{\alpha_i^\lor}$.
The ring $QK_T(G/B)$ is equipped with the quantum product.  For $w\in W$, let $\CO_{G/B}^w$ denote the class of the structure
sheaf of the opposite Schubert variety indexed by $w$.

\begin{thm}[\cite{K}, see also {\cite[\S~5]{MNS1}}]
\label{thm:Kato}
There is an isomorphism of abelian groups
\begin{equation}
\Psi: 
K_T(\QG)
\rightarrow
QK_T(G/B)
\end{equation}
such that 
\begin{itemize}
    \item[(i)] for $\la\in P,\;\xi\in Q^{\lor,+},\;
w\in W$, we have
\begin{equation}
\Psi(    e^{-\lambda}\COQ{wt_\xi})=e^\lambda Q^\xi 
\CO_{G/B}^{w}; \label{eq:Psi_COQ}
\end{equation} 
\item[(ii)] for $i \in I$ and ${Z} \in \KTQG$, we have 
\begin{equation} \label{eq:comm_fundamental}
    \Psi(\CO_{\QG}(w_{\circ} \vpi{i}) \otimes {Z}) = \CO_{G/B}(-\vpi{i}) \cdot \Psi({Z}). 
\end{equation}
\end{itemize}
\end{thm}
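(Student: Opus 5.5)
Since this theorem is quoted from \cite{K}, the plan is not to redo Kato's geometric argument via quasimap (Zastava) spaces. Instead I would verify the two properties directly, combinatorially: define $\Psi$ by the formula in (i), and deduce (ii) by comparing two Chevalley-type formulas.

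\emph{Step 1: $\Psi$ is well defined and bijective.} Formula \eqref{eq:Psi_COQ} prescribes $\Psi$ on the topological $\Rep$-basis $\{\COQ{wt_\xi}\}$ of $\KTQG$. It is semilinear for the involution $e^{-\lambda}\mapsto e^{\lambda}$ of $\Rep$. For fixed $\xi\in Q^{\lor,+}$ only the $|W|$ elements $wt_\xi$, $w\in W$, contribute to the coefficient of $Q^\xi$. Hence an arbitrary formal sum $\sum_x c_x\COQ{x}$ goes to a well-defined element of $R(T)[\![Q]\!]\otimes_{R(T)}K_T(G/B)$. Since $\{\CO^w_{G/B}\}_{w\in W}$ is an $R(T)$-basis of $K_T(G/B)$, this map is an isomorphism of abelian groups.

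\emph{Step 2: reduction of (ii) to Schubert classes.} I would first check that both sides of \eqref{eq:comm_fundamental} are compatible with infinite sums and with the translation structure. On the semi-infinite side, the translation $t_\zeta$ ($\zeta\in Q^{\lor,+}$) acts on $\QG$, sends $\COQ{x}$ to $\COQ{xt_\zeta}$, and twists $\COQl{\lambda}$ only by a character $e^{\langle\lambda,\zeta\rangle}$-type factor (up to the conventions of \cite{MNS1}). On the quantum side, quantum multiplication is $R(T)[\![Q]\!]$-linear and continuous in the $Q$-adic topology. It then suffices to prove
\[
\Psi\bigl(\CO_{\QG}(w_\circ\vpi{i})\otimes\COQ{w}\bigr)=\CO_{G/B}(-\vpi{i})\cdot\CO^w_{G/B},\qquad w\in W,
\]
keeping track of the character twist caused by the translation.

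\emph{Step 3: comparing Chevalley formulas (the main obstacle).} For the left-hand side I would invoke the semi-infinite Chevalley formula of Naito--Orr--Sagaki for anti-dominant line bundles. It expresses $\CO_{\QG}(w_\circ\vpi i)\otimes\COQ{w}$ as a sum over semi-infinite Lakshmibai--Seshadri paths of shape $\vpi i$ starting at $w$. Each path contributes a term $\pm e^{\mathrm{wt}}\COQ{\kappa(\eta)t_{\xi(\eta)}}$, where the translation part is governed by quantum edges of the quantum Bruhat graph. For the right-hand side I would use the quantum $K$-theoretic Chevalley formula for $\CO_{G/B}(-\vpi i)$ due to Lenart--Postnikov (in the quantum alcove model) and Lenart--Naito--Sagaki. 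This formula expresses the product as a sum over quantum walks in the quantum Bruhat graph, weighted by $Q^{\mathrm{down}}$.

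The heart of the proof is a weight-preserving bijection between the two indexing sets that matches final directions with $w$, translation parts with Novikov monomials, and signs and weights under $e^{-\lambda}\leftrightarrow e^{\lambda}$. This is where the real difficulty lies. My first attempt would be to use the known bijection between semi-infinite LS paths of shape $\vpi i$ and quantum LS paths, and then the quantum LS path to quantum alcove model bijection of Lenart--Naito--Sagaki--Schilling--Shimozono. I would check that degree functions on the two sides correspond. Once this matching holds for every $w$, Step 2 extends it to all of $\KTQG$, giving (ii).
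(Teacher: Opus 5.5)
\textbf{There is a genuine gap: Step 3 is circular.}

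Steps 1 and 2 are essentially formal. Once $\Psi$ is defined by (i), bijectivity is automatic. One correction to Step 2: in $K_T(\QG)$, without loop rotation, tensoring with a line bundle commutes with $\tau_\zeta$ on the nose. The twist you allude to is a power of the loop-rotation parameter $q$, which is absent here. It has to be absent, because $\Psi\circ\tau_\zeta=Q^\zeta\,\Psi$ and quantum multiplication is $R(T)[\![Q]\!]$-linear.

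So the whole content of the theorem sits in Step 3, and that step assumes what it is meant to prove. The quantum $K$-theoretic Chevalley formula for $\CO_{G/B}(-\varpi_i)\cdot\CO^w_{G/B}$ in arbitrary type is not an independent input. Lenart--Postnikov only \emph{conjectured} the quantum version of their alcove-model formula. To my knowledge, Lenart--Naito--Sagaki proved it in general type as follows:
\begin{itemize}
\item they first established the Chevalley formula in $K_T(\QG)$, expressed in the quantum alcove model;
\item they then transported it to $QK_T(G/B)$ through Kato's isomorphism.
\end{itemize}
That second step uses precisely property (ii), the statement you are trying to prove. The term-by-term matching you plan to build is how the quantum formula was \emph{derived} from the semi-infinite one. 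Carrying it out would re-verify that derivation, not establish (ii).

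What is missing is any access to the quantum product that does not already pass through $\Psi$. Quantum multiplication on $QK_T(G/B)$ is defined by $K$-theoretic Gromov--Witten invariants (Givental, Lee). No amount of path combinatorics on the $\QG$ side can tell you what those invariants are. In Kato's proof the bridge is geometric: it runs through Drinfeld's quasi-map spaces, which is how one accesses the $K$-theoretic $J$-function of $G/B$. Roughly, the compatibility of line-bundle twists on $\QG$ with quantum multiplication by line bundles is then extracted using a reconstruction of that multiplication by difference operators. Nothing in your plan plays this role.

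The paper itself does not reprove any of this; it simply quotes the statement from Kato and from \S 5 of Maeno--Naito--Sagaki. To turn your proposal into a proof, you would have to do one of two things:
\begin{itemize}
\item cite the theorem, as the paper does; or
\item supply a computation of $\CO_{G/B}(-\varpi_i)\cdot\CO^w_{G/B}$ for all $i$ and $w$, in all types, that is independent of $\Psi$. I am not aware of such a computation in the literature in this generality.
\end{itemize}
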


 \begin{remark}
     Since the conventions of Kato's semi-infinite flag manifold and our one are different, we need $w_{\circ}$ in the left-hand side of \eqref{eq:comm_fundamental}. 
 \end{remark}

\section{Proof of Theorem \ref{thm:main}}
\begin{lem}
\label{lem:Phi_sends_O_to_L}
For $\la\in P$ and any $\FZ \in \KTQG$, we have
\begin{equation}
\label{eq:Phi_sends_O_to_L}
\Psi(\COQl{-w_\circ\lambda} \otimes \FZ)
=\L{\la} \cdot \Psi(\FZ).\end{equation}
\end{lem}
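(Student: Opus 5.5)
We reduce the statement to Theorem~\ref{thm:Kato}(ii) by expressing the line bundle \(\COQl{-w_\circ\la}\) as a tensor product of the line bundles \(\CO_{\QG}(w_\circ\vpi{i})\) and their inverses. Write \(\la=\sum_{i\in I}m_i\vpi{i}\) with \(m_i=\langle\la,\alpha_i^\lor\rangle\). Since \(w_\circ\) acts linearly on \(P\), we have \(-w_\circ\la=\sum_{i\in I}(-m_i)\,w_\circ\vpi{i}\). The assignment \(\mu\mapsto\COQl{\mu}\) is multiplicative, \(\COQl{\mu+\nu}=\COQl{\mu}\otimes\COQl{\nu}\), because it comes from characters of the relevant group. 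Hence
\[
\COQl{-w_\circ\la}\otimes\FZ
=\bigotimes_{i\in I}\CO_{\QG}(w_\circ\vpi{i})^{\otimes(-m_i)}\otimes\FZ .
\]

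For nonnegative exponents we iterate Theorem~\ref{thm:Kato}(ii). Tensoring by \(\CO_{\QG}(w_\circ\vpi{i})\) corresponds under \(\Psi\) to quantum multiplication by \(\CO_{G/B}(-\vpi{i})\). Applying this \(k\) times, always to the already tensored element of \(\KTQG\), gives
\[
\Psi\bigl(\CO_{\QG}(w_\circ\vpi{i})^{\otimes k}\otimes\FZ\bigr)=\CO_{G/B}(-\vpi{i})^{k}\cdot\Psi(\FZ),
\]
where the power is taken with respect to the quantum product. The operators for different \(i\) commute on both sides, since tensor products commute and the quantum product is commutative and associative. So we may process the indices \(i\) one at a time.

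For negative exponents, apply (ii) to \(\FZ':=\CO_{\QG}(w_\circ\vpi{i})^{\otimes(-1)}\otimes\FZ\). This gives \(\Psi(\FZ)=\CO_{G/B}(-\vpi{i})\cdot\Psi(\FZ')\). Because \(\CO_{G/B}(-\vpi{i})\) is invertible in \(QK_T(G/B)\), as implicitly used in the definition of \(\L{\la}\), we obtain \(\Psi(\FZ')=\CO_{G/B}(-\vpi{i})^{-1}\cdot\Psi(\FZ)\). Iterating this handles any negative power. Combining all factors yields
\[
\Psi(\COQl{-w_\circ\la}\otimes\FZ)=\prod_{i\in I}\CO_{G/B}(-\vpi{i})^{-m_i}\cdot\Psi(\FZ)=\L{\la}\cdot\Psi(\FZ).
\]

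The main point needing care is the multiplicativity \(\COQl{\mu+\nu}=\COQl{\mu}\otimes\COQl{\nu}\), including \(\COQl{0}=\CO_{\QG}\), together with the well-definedness of tensoring formal infinite sums in \(\KTQG\) by line bundles. I would cite \cite[\S~3.1]{MNS1} and \cite{K} for both facts rather than reprove them. The invertibility of \(\CO_{G/B}(-\vpi{i})\) in the quantum \(K\)-ring holds because its constant term at \(Q=0\) is a classical line bundle class, which is a unit, and \(QK_T(G/B)\) is \(Q\)-adically complete.
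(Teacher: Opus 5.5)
Your proof is correct and follows the paper's argument: you decompose $\COQl{-w_\circ\la}$ into tensor powers of $\CO_{\QG}(w_\circ\vpi{i})$ and iterate Theorem~\ref{thm:Kato}(ii) together with its inverse. The one small difference is where invertibility comes from. The paper transports the invertibility of tensoring with $\CO_{\QG}(w_\circ\vpi{i})$ through the bijection $\Psi$ to get invertibility of quantum multiplication by $\CO_{G/B}(-\vpi{i})$, whereas you get it from the line bundle class being a unit at $Q=0$ plus $Q$-adic completeness; both arguments are valid.
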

\begin{proof}
For $i\in I$, let $A_i$ be the automorphism of $K_T(\QG)$ given by
tensor product with $\COQl{w_\circ\vpi{i}}$, and let $M_i$ be the
endomorphism of $QK_T(G/B)$ given by quantum multiplication by
$\CO_{G/B}(-\vpi{i})$.  Equation~\eqref{eq:comm_fundamental} says that
$\Psi\circ A_i=M_i\circ\Psi$.  Since $A_i$ is invertible, so is $M_i$,
and the same identity holds for every integral power of these
operators.

Write $\lambda=\sum_{i\in I}
\langle\lambda,\alpha_i^\lor\rangle\vpi{i}$.  Since
\[
\COQl{-w_\circ\lambda}
=
\bigotimes_{i\in I}
\COQl{w_\circ\vpi{i}}^{\otimes
-\langle\lambda,\alpha_i^\lor\rangle},
\]
successive application of \eqref{eq:comm_fundamental} and its inverse
gives
\[
\Psi(\COQl{-w_\circ\lambda}\otimes\FZ)
=
\prod_{i\in I}
\CO_{G/B}(-\vpi{i})^{-\langle\lambda,\alpha_i^\lor\rangle}
\cdot\Psi(\FZ)
=\L{\lambda}\cdot\Psi(\FZ).
\]
\end{proof}

For $\xi\in Q^{\lor,+}$, we define an
$\Rep$-linear endomorphism $\t{\xi}$ of $\KTQG$ by
\begin{equation}
\t{\xi}(\COQ{x})=\COQ{xt_{\xi}}.
\end{equation}

\begin{lem}[Naito--Sagaki {\cite[(3.11)]{NS}}] \label{lem:minuscule_correspondence}
If $\mu\in P$ is minuscule, then we have 
\begin{equation}
\Psi\left(
\prod_{\substack{i \in I \\ \langle\mu, \alpha_{i}^{\vee}\rangle = -1}} (1-\t{\alpha_i^\lor}) \CO_{\QG}(w_{\circ}\mu)\right)=
\CO_{G/B}(-\mu). 
\end{equation}
\end{lem}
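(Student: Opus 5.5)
The plan is to compute the class $\CO_{\QG}(w_\circ\mu)=\COQl{w_\circ\mu}\otimes\COQ{e}$ explicitly in the Schubert basis of $\KTQG$. We then apply the operators $1-\t{\alpha_i^\lor}$ and compare the outcome, via \eqref{eq:Psi_COQ}, with the classical expansion of $\CO_{G/B}(-\mu)$ in the opposite Schubert basis of $K_T(G/B)\subset QK_T(G/B)$. Since $\Psi$ is only an isomorphism of abelian groups, everything is done at the level of $\Rep$-linear expansions. The identity then reduces to an equality of coefficients: a finite sum on the $K_T(G/B)$ side, and a formal infinite sum on the $\KTQG$ side.

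\textbf{Step 1.} Use the semi-infinite Chevalley formula of Naito--Orr--Sagaki for $\COQl{\lambda}\otimes\COQ{x}$ with $x=e$ and $\lambda=w_\circ\mu$. It expresses the product as a sum over quantum Lakshmi\-bai--Seshadri paths $\eta$ of shape determined by $\mu$. Each term is $e^{\mathrm{wt}(\eta)}$ times $\COQ{\kappa(\eta)t_{\zeta(\eta)+\xi}}$, summed over $\xi$ ranging over a cone in $Q^{\lor,+}$; this sum is a product of geometric series $\prod(1-\t{\alpha_j^\lor})^{-1}$.

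\textbf{Step 2.} Specialize to minuscule $\mu$. Here every QLS path is a straight line $\eta=(v;0,1)$ with $v\in W^{\mu}$, so the wrap $\zeta(\eta)$ vanishes. The formula collapses to a sum over $v\cdot\mu$, $v\in W$ modulo the stabilizer, of $e^{-v\mu}$-type coefficients. The translation part is controlled by the geometric series attached exactly to those simple coroots $\alpha_i^\lor$ with $\langle\mu,\alpha_i^\lor\rangle=-1$. Simple directions with $\langle\mu,\alpha_i^\lor\rangle\ge 0$ contribute no infinite tail, because the Demazure-type graded character has no $q$-denominator in those directions.

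\textbf{Step 3.} The $\t{\xi}$ commute with one another and with the $\Rep$-action. Applying $\prod_{\langle\mu,\alpha_i^\lor\rangle=-1}(1-\t{\alpha_i^\lor})$ therefore cancels the geometric series term by term and leaves a finite $\Rep$-combination of classes $\COQ{w}$ with $w\in W$, i.e.\ with no translation part. Applying $\Psi$ via \eqref{eq:Psi_COQ} turns this into $\sum_w c_w\,\CO^w_{G/B}$, using the sign convention $e^{-\lambda}\mapsto e^{\lambda}$. It remains to recognize this sum as the classical Chevalley (Lenart--Postnikov / LS-path) expansion of $\CO_{G/B}(-\mu)$. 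For minuscule $\mu$ that expansion is also indexed by straight LS paths, i.e.\ by the same elements of $W/W_\mu$, so the coefficients can be matched path by path.

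\textbf{Main obstacle.} The hardest step is Step 2: proving that the infinite tail is exactly $\prod_{\langle\mu,\alpha_i^\lor\rangle=-1}(1-\t{\alpha_i^\lor})^{-1}$, with no further corrections. This requires careful bookkeeping of the translation parts, of the twist by $w_\circ$, and of the conventions between Kato's $\QG$ and the one used here. My first attempt would be to read it off from the graded character of the Demazure submodule of the level-zero extremal weight module $V(w_\circ\mu)$, since $\COQl{w_\circ\mu}$ has global sections equal to its dual. For minuscule $\mu$ that character is the orbit sum times $\prod_{\langle\mu,\alpha_i^\lor\rangle=-1}(1-q_i)^{-1}$.
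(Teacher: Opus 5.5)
Your overall architecture matches the route the paper points to in the remark after the lemma; the paper itself just imports the statement from \cite[(3.11)]{NS}. That architecture is: expand the line bundle class by a semi-infinite Chevalley formula, use that minuscule shapes admit only straight-line paths, cancel the geometric tail with the operators $1-\tau_{\alpha_i^\vee}$, and compare the finite remainder with the classical expansion via \eqref{eq:Psi_COQ}.

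There is, however, a genuine gap in Steps 1--2. The Naito--Orr--Sagaki formula is a Chevalley formula for \emph{anti-dominant} weights only, and QLS paths are defined only for dominant shapes. In the lemma, $\mu$ is an arbitrary element of $W\cdot\varpi$. The product $\prod(1-\tau_{\alpha_i^\vee})$ is nonempty exactly when $\mu$ is not dominant, and then $w_\circ\mu$ is in general of mixed sign. Already $\mu=\varepsilon_2=\varpi_2-\varpi_1$ in type $\mathrm{A}_2$, which the type $\mathrm{A}$ relations need, has $w_\circ\mu=\varepsilon_2$: it pairs to $-1$ with $\alpha_1^\vee$ and to $+1$ with $\alpha_2^\vee$. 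For such a weight neither the anti-dominant formula nor the formula of the opposite sign (Kato--Naito--Sagaki) applies. So Step 1 has no input, and the exact shape of the tail in Step 2 is asserted rather than derived.

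What is needed is a Chevalley formula for \emph{arbitrary} weights on $\QG$. This is the Lenart--Naito--Sagaki formula in the quantum alcove model \cite[Theorem~33]{LNS-Selecta}, recast in terms of interpolated QLS paths \cite{KN24}. For minuscule $\mu$ every IQLS path of shape $\mu$ is straight. That is what yields exactly the factor $\prod_{\langle\mu,\alpha_i^\vee\rangle=-1}(1-\tau_{\alpha_i^\vee})^{-1}$ times a finite combination of the $\COQ{w}$, $w\in W$. The same issue affects Step 3. For non-dominant $\mu$ the classical expansion of $\CO_{G/B}(-\mu)$ is not an LS-path expansion, so matching coefficients requires a general-weight classical formula, for instance the $Q=0$ shadow of the same alcove-model formula.

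Your suggested way around the main obstacle does not repair this. The identification of global sections of a line bundle on $\QG(x)$ with the dual of a Demazure submodule of a level-zero extremal weight module, with vanishing higher cohomology, is a theorem only in the appropriate dominant range. For a mixed-sign weight it gives no control of the $K$-class, just as on $G/B$ the class of a non-dominant line bundle cannot be read off from its sections.

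Moreover, a graded character carries a single grading variable $q$, so ``orbit sum times $\prod(1-q_i)^{-1}$'' is not a statement about graded characters at all. Separating the individual translations $t_\xi$, $\xi\in Q^{\lor,+}$, on which the $\tau$'s act, would require twisting into the dominant range and proving a Demazure-character identity for all twists. That is precisely the hard combinatorial content of the general Chevalley formula you are trying to avoid.
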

\begin{remark}
The above result can also be deduced using interpolated quantum
Lakshmibai--Seshadri (IQLS for short) paths \cite{KN24} together with the Chevalley formula
for $\QG$ by 
Lenart, Naito, and Sagaki \cite[Theorem 33]{LNS-Selecta}.
If $\mu$ is minuscule, then all IQLS paths of shape $\mu$ are straight-line paths, see \cite[Remark~4.4]{KN24}. This directly implies \cite[Corollary~3.5]{NS}. 
\end{remark}

\begin{lem}\label{cor_Omu_Lmu}
    If $\mu$ is a minuscule fundamental weight then $$\CO_{G/B}(-\mu)=\prod_{\substack{i\in I\\
        \langle\mu,\alpha_i^\lor\rangle={-}1}}
        (1-Q_i)
\;\,    \L{-\mu}.$$
\end{lem}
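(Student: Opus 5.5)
For a minuscule fundamental weight $\mu=\varpi_j$ the statement is immediate. Since $\langle\varpi_j,\alpha_i^\lor\rangle=\delta_{ij}\geq 0$ for all $i$, the index set $\{i\in I\mid\langle\mu,\alpha_i^\lor\rangle=-1\}$ is empty and the product is $1$. Also $\L{-\varpi_j}=\CO_{G/B}(-\varpi_j)$ by definition. The identity is really needed in the main theorem for every weight $\mu\in W\cdot\varpi$, and all of these are minuscule. So I will prove it for an arbitrary minuscule $\mu\in P$, with $\L{-\mu}$ defined by the quantum product formula. The plan is to transport both sides through Kato's isomorphism $\Psi$ of Theorem~\ref{thm:Kato}, starting from Lemma~\ref{lem:minuscule_correspondence}.

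\emph{Step 1 (translations become Novikov variables).} I show $\Psi\circ\t{\xi}=Q^\xi\cdot\Psi$ for $\xi\in Q^{\lor,+}$. On basis elements this follows from \eqref{eq:Psi_COQ}: we have $\Psi(e^{-\lambda}\COQ{wt_\zeta t_\xi})=e^\lambda Q^{\zeta+\xi}\CO^w_{G/B}$. Both sides are $\Rep$-linear, and $\Psi$ is compatible with the infinite formal sums defining $\KTQG$, since it is defined term by term. Hence the identity holds on all of $\KTQG$.

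\emph{Step 2 (translations commute with line bundles).} I show $\t{\xi}(\COQl{\nu}\otimes \FZ)=\COQl{\nu}\otimes\t{\xi}(\FZ)$ for all $\nu\in P$. I will take this from the structure of the Chevalley formula for $\QG$ of Lenart--Naito--Sagaki (or the formulation in \cite[\S3]{MNS1}). Right multiplication by $t_\xi$ is induced by an automorphism of $\QG$ that preserves the $\I$-action. The expansion of $\COQl{\nu}\otimes\COQ{x}$ is indexed by quantum LS paths starting at $x$. That expansion is carried to the expansion for $xt_\xi$ by shifting every index by $t_\xi$, with the same coefficients in $\Rep$. This step is the one I expect to be the main obstacle. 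One must check that no extra character twist such as $e^{\langle\nu,\xi\rangle}$ appears under the sign conventions used here. If such a twist did appear, it would have to be absorbed into the normalisation of $\t{\xi}$.

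\emph{Step 3 (assemble).} Write $\CO_{\QG}(w_\circ\mu)=\COQl{w_\circ\mu}\otimes\COQ{e}$. By Step 2,
\[
\prod_{\langle\mu,\alpha_i^\lor\rangle=-1}(1-\t{\alpha_i^\lor})\,\CO_{\QG}(w_\circ\mu)
=\COQl{-w_\circ(-\mu)}\otimes\prod_{\langle\mu,\alpha_i^\lor\rangle=-1}(1-\t{\alpha_i^\lor})\COQ{e}.
\]
Apply $\Psi$ to both sides. Lemma~\ref{lem:Phi_sends_O_to_L} with $\lambda=-\mu$, together with Step 1, shows that the image of the right-hand side is
\[
\L{-\mu}\cdot\prod_{\langle\mu,\alpha_i^\lor\rangle=-1}(1-Q_i)\,\CO^{e}_{G/B}
=\prod_{\langle\mu,\alpha_i^\lor\rangle=-1}(1-Q_i)\,\L{-\mu},
\]
since $\CO^e_{G/B}=1$. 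By Lemma~\ref{lem:minuscule_correspondence}, the image of the left-hand side is $\CO_{G/B}(-\mu)$, which proves the claim.
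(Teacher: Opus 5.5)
Your proof has the same skeleton as the paper's: Lemma~\ref{lem:minuscule_correspondence}, then $\Psi\circ\tau_\xi=Q^\xi\Psi$ from \eqref{eq:Psi_COQ}, then Lemma~\ref{lem:Phi_sends_O_to_L}. Your observation that the literal statement is trivial for $\mu=\varpi_j$ is also correct. The paper's argument, like yours, actually proves the identity for every minuscule $\mu\in P$, and that is the version used in Theorem~\ref{thm:main} and in Section~\ref{sec:example}.

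The one departure is Step~2. It is the only step you leave unproved, and it is not needed. Your Step~1 holds for \emph{every} element of $\KTQG$, in particular for $Y:=\COQl{w_\circ\mu}\otimes\COQ{e}$ itself. So directly
\[
\Psi\Bigl(\prod_{\langle\mu,\alpha_i^\lor\rangle=-1}(1-\tau_{\alpha_i^\lor})\,Y\Bigr)
=\prod_{\langle\mu,\alpha_i^\lor\rangle=-1}(1-Q_i)\,\Psi(Y)
=\prod_{\langle\mu,\alpha_i^\lor\rangle=-1}(1-Q_i)\,\L{-\mu},
\]
where the last equality is Lemma~\ref{lem:Phi_sends_O_to_L} with $\lambda=-\mu$ and $\FZ=\COQ{e}$. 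This is exactly the paper's computation; you never need to move $\tau_\xi$ past the line bundle.

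If you still want Step~2, do not go through the Chevalley formula, because it follows formally from Kato's theorem. Take $\FZ\in\KTQG$ and $\nu=-w_\circ\lambda$. Lemma~\ref{lem:Phi_sends_O_to_L}, Step~1, and the $R(T)[\![Q]\!]$-bilinearity of the quantum product give
\[
\Psi\bigl(\COQl{\nu}\otimes\tau_\xi\FZ\bigr)=\L{\lambda}\cdot Q^\xi\,\Psi(\FZ)=Q^\xi\,\Psi\bigl(\COQl{\nu}\otimes\FZ\bigr)=\Psi\bigl(\tau_\xi(\COQl{\nu}\otimes\FZ)\bigr).
\]
Since $\Psi$ is bijective, $\tau_\xi$ commutes with every line-bundle twist. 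So no character correction of the kind you feared can occur in $\KTQG$.
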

\begin{proof}
    From Lemma \ref{lem:minuscule_correspondence}, we have 
\begin{align*}
\Psi\left(\prod_{\substack{i \in I \\ \langle \mu, \alpha_{i}^{\vee} \rangle = -1}} (1-\t{\alpha_i^\lor})
\;\,\COQl{w_\circ \mu}\right) 
=\CO_{G/B}({-}\mu).
\end{align*}

On the other hand, we can compute the same element as follows: 
\begin{align*}
&\Psi\left(
\prod_{\substack{i \in I \\ \langle \mu, \alpha_{i}^{\vee} \rangle = -1}} (1-\t{\alpha_i^\lor})
\;\,\COQl{w_\circ \mu}\right) \\
&=   
\prod_{\substack{i \in I \\ \langle \mu, \alpha_{i}^{\vee} \rangle = -1}} (1-Q_i)
\;\,\Psi\left(\COQl{w_\circ \mu}\right)
\quad \text{by \eqref{eq:Psi_COQ}}\\
&=\prod_{\substack{i \in I \\ \langle \mu, \alpha_{i}^{\vee} \rangle = -1}} (1-Q_i)
\;\,\L{-\mu},
\end{align*}
where in the last equality we used 
Lemma \ref{lem:Phi_sends_O_to_L}.
\end{proof}

\begin{proof}
[Proof of Theorem \ref{thm:main}]
Using the classical Borel presentation (see \cite{Mc})
\begin{equation}
    K_{T}(G/B) \simeq R(T) \otimes_{R(T)^{W}} R(T),
\end{equation}
we have the following identity in $QK_T(G/B)$
$$\sum_{\mu\in W\cdot\varpi}\CO_{G/B}(-\mu) = \chi_\varpi.$$ 
    Now the proof follows using Lemma \ref{cor_Omu_Lmu}.
\end{proof}

\section{Explicit minuscule-weight relations in classical types}\label{sec:example}
 In this section we write out the minuscule-weight relations in classical types. 

 For simplicity, we write
\[
  L_i:=L(-\varpi_i)=\CO_{G/B}(-\varpi_i)
  \qquad (i\in I).
\]
Here and below, negative powers are taken with respect to the quantum product.

\subsection{Type \(\mathrm A_{n-1}\)}

Let \(G=SL_n (\C) \). All the fundamental weights
\(\varpi_k\), \(1\leq k\leq n-1\), are minuscule. Set
\[
[n]:=\{1,\ldots,n\},\qquad
\varpi_0=\varpi_n=0,\qquad
\varepsilon_j:=\varpi_j-\varpi_{j-1}
\quad (1\leq j\leq n).
\]
For \(J\subset[n]\), put
\[
\varepsilon_J:=\sum_{j\in J}\varepsilon_j.
\]
Then
\[
W\cdot\varpi_k
=
\{\varepsilon_J\mid J\subset[n],\ |J|=k\}.
\]
For \(1\leq i\leq n-1\), we have
\[
\langle\varepsilon_J,\alpha_i^\vee\rangle
=
\begin{cases}
  1  & \text{if \(i\in J\) and \(i+1\notin J\)},\\
 -1  & \text{if \(i\notin J\) and \(i+1\in J\)},\\
  0  & \text{otherwise}.
\end{cases}
\]
Set \(L_0=L_n=1\). From the definition of \(L(\lambda)\), we obtain
\[
L(-\varepsilon_J)
=
\prod_{j\in J}L_jL_{j-1}^{-1}.
\]
Therefore, Theorem~\ref{thm:main} gives
\begin{equation}
\sum_{\substack{J\subset[n]\\ |J|=k}}
\left(
\prod_{\substack{1\leq i\leq n-1\\
i\notin J,\ i+1\in J}}
(1-Q_i)
\right)
\prod_{j\in J}L_jL_{j-1}^{-1}
=
\chi_{\varpi_k}.
\label{eq:An-minuscule}
\end{equation}

We compare this relation with the type \(\mathrm A\) presentation
in \cite{MNS1}. Set \(Q_0=Q_n=0\), and write
\(z_j:=1-x_j\) in the notation of \cite{MNS1}. The variable \(z_j\)
represents the modified line-bundle class
\[
z_j
\longmapsto
(1-Q_j)^{-1}\CO_{G/B}(-\varepsilon_j).
\]
Applying Lemma~\ref{cor_Omu_Lmu} to the weight \(\varepsilon_j\), we obtain
\[
\CO_{G/B}(-\varepsilon_j)
=
(1-Q_{j-1})L(-\varepsilon_j)
=
(1-Q_{j-1})L_jL_{j-1}^{-1}.
\]

Thus the variables \(z_j\) and \(L_j\) are related by
\begin{equation}
z_j
=
\frac{1-Q_{j-1}}{1-Q_j}
L_jL_{j-1}^{-1}
\qquad (1\leq j\leq n).
\label{eq:An-z-L}
\end{equation}
In terms of the variables \(z_j\), the relation in \cite{MNS1} is
\[
\sum_{\substack{J\subset[n]\\ |J|=k}}
\left(
\prod_{\substack{1\leq i\leq n-1\\
i\in J,\ i+1\notin J}}
(1-Q_i)
\right)
\prod_{j\in J}z_j
=
\chi_{\varpi_k}.
\]
Using \eqref{eq:An-z-L}, we have
\[
\left(
\prod_{\substack{1\leq i\leq n-1\\
i\in J,\ i+1\notin J}}
(1-Q_i)
\right)
\prod_{j\in J}z_j
=
\left(
\prod_{\substack{1\leq i\leq n-1\\
i\notin J,\ i+1\in J}}
(1-Q_i)
\right)
\prod_{j\in J}L_jL_{j-1}^{-1}.
\]
Hence the relation in \cite{MNS1} agrees exactly with
\eqref{eq:An-minuscule}.

For example, when \(G=SL_3 (\C)\), the relations corresponding to
\(\varpi_1\) and \(\varpi_2\) are, respectively,
\[
L_1+(1-Q_1)L_1^{-1}L_2+(1-Q_2)L_2^{-1}
=
\chi_{\varpi_1}
\]
and
\[
L_2+(1-Q_2)L_1L_2^{-1}+(1-Q_1)L_1^{-1}
=
\chi_{\varpi_2}.
\]

\subsection{Type \(\mathrm C_n\)}
The relation for 
$(\mathrm{C}_n,\vpi{1})$ reads
\begin{equation}
\sum_{i=1}^n
(1-Q_{i-1})L_iL_{i-1}^{-1}
+
\sum_{i=1}^n
(1-Q_i)L_{i-1}L_i^{-1}
=
\chi_{\varpi_1},
\label{eq:B_n-n}
\end{equation}
with $Q_0=0, L_0=1.$ In 
Kouno and Naito \cite{KN},
it was proved that the following relation holds in $QK_T({Sp}_{2n} (\C) /B)$:
\begin{equation}
F_1:=\sum_{i=1}^n(1-Q_i)z_i
+\sum_{i=1}^n(1-Q_{i-1})z_i^{-1}=\chi_{\vpi{1}}.
\label{eq:F_1_KN}
\end{equation}
The variables $z_i$ in \cite[\S~6.2]{KN} represent 
certain classes of modified line bundles as follows:
\begin{equation}
z_i\mapsto (1-Q_i)^{-1}\CO(-\varepsilon_i)\quad\text{and}
\quad
z_i^{-1}\mapsto (1-Q_{i-1})^{-1}\CO(\varepsilon_i)\quad (1\le i\le n),
\end{equation}
where $\varepsilon_{i} := \vpi{i} - \vpi{i-1}$ for $1 \le i \le n$ with $\vpi{0} = 0$. 
Thus we can identify 
the variables $z_i$ and $L_i$ by the equations
\begin{equation}
 z_i=({1-Q_i})^{-1}{(1-Q_{i-1})}
{L_i}L_{i-1}^{-1}.
\end{equation}
Then 
the relations \eqref{eq:B_n-n}
and \eqref{eq:F_1_KN}
coincide.

\subsection{Type $\mathrm{B}_n$}
In type $\mathrm{B}_n$, 
the weight $\vpi{n}$ corresponds to the spin representation.
We record the relations explicit for low
ranks. 
\begin{example}
$(\mathrm{B}_2,\vpi{2})$
\[L_2
+(1-Q_2)L_1L_2^{-1}
+(1-Q_1)L_1^{-1}L_2
+(1-Q_2)L_2^{-1}
=
\chi_{\varpi_2}.\]
\end{example}
\begin{example}
$(\mathrm{B}_3,\vpi{3})$
\[
\begin{aligned}
&
L_3
+(1-Q_3)L_2L_3^{-1}
+(1-Q_2)L_1L_3L_2^{-1}
+(1-Q_3)L_1L_3^{-1} \\
&\quad
+(1-Q_1)L_3L_1^{-1}
+(1-Q_1)(1-Q_3)L_2L_1^{-1}L_3^{-1} \\
&\quad
+(1-Q_2)L_3L_2^{-1}
+(1-Q_3)L_3^{-1}
=
\chi_{\varpi_3}.
\end{aligned}
\]    
\end{example}

\subsection{Type $\mathrm{D}_n$}

In type $\mathrm{D}_n$, there are three
minuscule fundamental weights
$$\vpi{1},\vpi{n-1},\vpi{n}.$$ The weights \(\varpi_{n-1}\) and
\(\varpi_n\) correspond to the two half-spin representations, while
\(\varpi_1\) corresponds to the vector representation.
\begin{example}
Type $\mathrm{D}_4$.
We write explicitly the relations
for \(\varpi_1\) and \(\varpi_4\): 
\begin{equation}
\label{eq:D4-varpi1-minuscule-relation}
\begin{aligned}
&
L_1
+
(1-Q_1)L_2L_1^{-1}
+
(1-Q_2)L_4L_2^{-1}L_3
+
(1-Q_3)L_4L_3^{-1}
+
(1-Q_4)L_4^{-1}L_3
\\
&\quad
+
(1-Q_4)(1-Q_3)L_4^{-1}L_2L_3^{-1}
+
(1-Q_2)L_2^{-1}L_1
+
(1-Q_1)L_1^{-1}
=
\chi_{\vpi{1}},\\
 &
 L_4
 +
 (1-Q_4)L_2L_4^{-1}
 +
 (1-Q_2)L_1L_2^{-1}L_3
 +
 (1-Q_3)L_1L_3^{-1}
 +
 (1-Q_1)L_1^{-1}L_3
 \\
 &\quad
 +
 (1-Q_1)(1-Q_3)L_1^{-1}L_2L_3^{-1}
 +
 (1-Q_2)L_2^{-1}L_4
 +
 (1-Q_4)L_4^{-1}
 =
 \chi_{\vpi{4}}.
\end{aligned}
\end{equation}
They are carried to each other by
the Dynkin diagram automorphism interchanging the outer nodes \(1\)
and \(4\).  The remaining minuscule relation, corresponding to
\(\varpi_3\), is obtained in the same way by triality, and will be
omitted.
\end{example}

\end{document}